\title{On VC-density over indiscernible sequences}
\author{Vincent Guingona,\\ Cameron Donnay Hill}
\address[Guingona]{University of Notre Dame \\ Department of Mathematics \\ 255 Hurley, Notre Dame, IN 46556}
\email{guingona.1@nd.edu}
\urladdr{http://www.nd.edu/~vguingon/}
\address[Hill]{Wesleyan University \\ Department of Mathematics and Computer Science \\  45 Wyllys Avenue, Middletown, CT 06459}
\email{cdhill@wesleyan.edu}
\date{\today}
\thanks{2010 Mathematics Subject Classification. Primary: 03C45}
\newtheorem{thm}{Theorem}[section]
\newtheorem{cor}[thm]{Corollary}
\newtheorem{lem}[thm]{Lemma}
\newtheorem{prop}[thm]{Proposition}
\newtheorem{ques}[thm]{Open Question}
\theoremstyle{remark}
\newtheorem{expl}[thm]{Example}
\theoremstyle{definition}
\newtheorem{defn}[thm]{Definition}
\newcommand{\tp}{\mathrm{tp} }
\newcommand{\comp}{\mathrm{compl} }
\renewcommand{\phi}{\varphi}
\newcommand{\CC}{\mathfrak{C}}
\renewcommand{\models}{\vDash}
\begin{document}

\begin{abstract}
 In this paper, we study VC-density over indiscernible sequences (denoted VC${}_{\text{ind}}$-density).  We answer an open question in \cite{ADHMS}, showing that VC${}_{\text{ind}}$-density is always integer valued.  We also show that VC${}_{\text{ind}}$-density and dp-rank coincide in the natural way.
\end{abstract}

\maketitle

\section{Introduction}

In recent years,  the examination of NIP (or dependent) theories has been especially fruitful. NIP includes many theories of general mathematical interest, such as o-minimal theories (including the theory of the real numbers) and C-minimal theories (including algebraically-closed valued fields). While there have been several remarkable successes, no one has yet provided a decisively ``correct'' notion of super-dependence in analogy with super-stability. However, one viable candidate notion is that of strong-dependence addressed by Shelah in \cite{Shelah715}, \cite{Shelah783}, \cite{Shelah863} and \cite{Shelah900}. Associated with the notion of strong-dependence is the notion of dp-rank, one of the subjects of this article. 
In some sense, dp-rank is indeed a measure of the complexity of families of definable sets relative to a theory.  In a recent paper of Kaplan, Onshuus, and Usvyatsov \cite{KOUdpmin}, they show that dp-rank is subadditive in the way one would expect a dimension to be (see Theorem \ref{Thm_Subadditivity} below, which is Theorem 4.8 of \cite{KOUdpmin}).

Following a somewhat different line of investigation, Aschenbrenner, Dolich, Haskell, MacPherson, and Starchenko in \cite{ADHMS} suggest using VC-density as a dimension for strongly-dependent theories.  In fact, the thesis of \cite{ADHMS} holds that VC-density is, in some sense, a more ``accurate'' measure of the complexity of a formula than the competitors, possibly including dp-rank. We say ``possibly,'' for the relationship between dp-rank and VC-density is not well understood. (It is known that VC-density bounds dp-rank from above -- see Proposition \ref{Prop_VCDPRelation} below -- but it is not clear that VC-density also provides a lower bound for dp-rank.)

This paper amounts to a first attempt to establish that relationship. In order to make headway, we simplify the situation by analyzing VC${}_{\text{ind}}$-density -- VC-density evaluated exclusively relative to indiscernible sequences -- instead of the full VC-density. In this restricted context, we answer several of the open questions around the relation between dp-rank and VC-density. Along the way, we establish some additional interesting results; for example, we show that VC${}_{\text{ind}}$-density is always integer-valued (Theorem \ref{Thm_VCindDensityIntegerValued} below).

The structure of this article is as follows. In the remainder of this section, we provide most of the important definitions and state the main results. In Section \ref{Sec_LocalDPRank}, we introduce a notion of ``local'' dp-rank, and we show that it is, indeed, the ``correct'' localization of the standard dp-rank. Though its theory is not deep in itself (or as relates to dp-rank), the local dp-rank is rather interesting in its relation to VC${}_{\text{ind}}$-density. Indeed, in Section \ref{Sec_VCindDP}, we show that local dp-rank and VC${}_{\text{ind}}$-density coincide. We derive Theorems \ref{Thm_VCindDensityIntegerValued} and  \ref{Thm_VCindDensityDPRank} as straightforward corollaries.  Finally, in Section \ref{Sec_Future}, we briefly discuss how the techniques employed in this paper to understand VC${}_{\text{ind}}$-density might be applied to the original questions surrounding VC-density.

\subsection{VC-density and VC${}_{\text{ind}}$-density}

Let $T$ be a complete first-order theory in some language $L$ and let $\CC$ be a monster model for $T$.

Let $p(x)$ be a partial type and let $\varphi(x;y)$ be any formula.  For any set $B \subseteq \CC_y$, let $S_\varphi(B) \cap [p]$ denote the set of all $\varphi$-types over $B$ consistent with $p(x)$.  That is, all maximal subsets of
\[
 \{ \varphi(x; b) : b \in B \} \cup \{ \neg \varphi(x; b) : b \in B \}
\]
consistent with $p(x)$.

\begin{defn}\label{Defn_VCDensity}
 Fix $\ell \in \mathbb{R}$.  We say that a formula $\varphi(x;y)$ has \emph{VC-density $\le \ell$ with respect to $p$} if there exists $K \in \mathbb{R}$ such that, for all finite $B \subseteq \CC_y$
 \[
  \left| S_\varphi(B) \cap [p] \right| \le K \cdot |B|^\ell.
 \]
 If $p(x)$ is the partial type $x = x$, we drop ``with respect to $p$.''
\end{defn}

One interesting open question, originally posed in \cite{ADHMS}, is can we determine the VC-density of formulas in many variables knowing the VC-density of formulas in one variable?

\begin{ques}\label{Ques_VCDenAdd}
 If there exists $k < \omega$ such that, for all $\varphi(x;y)$ with $|x| = 1$, the VC-density of $\varphi(x;y)$ is $\le k$, then does there exists a function $f: \omega \rightarrow \omega$ such that, for all $\varphi(x;y)$, the VC-density of $\varphi$ is $\le f( |x| )$?
\end{ques}

Another question is can we relate dp-rank and VC-density in a natural way?

\begin{ques}\label{Ques_VCDenDPMin}
 Fix $n < \omega$.  Is it true that $p(x)$ has dp-rank $\le n$ if and only if, for all formulas $\varphi(x;y)$, $\varphi$ has VC-density $\le n$ with respect to $p$?
\end{ques}

In particular, is it true that $T$ is dp-minimal if and only if the VC-density of any formula $\varphi(x;y)$ is $\le |x|$?  One direction is clear, namely:

\begin{prop}\label{Prop_VCDPRelation}
 Fix a partial type $p(x)$ and $n < \omega$.  If, for all formulas $\varphi(x; y)$, $\varphi$ has VC-density $\le n$ with respect to $p$, then $p$ has dp-rank $\le n$.
\end{prop}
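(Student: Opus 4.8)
The plan is to argue by contraposition: assuming $p$ has dp-rank $> n$, I will produce a single formula $\psi(x;z)$ whose VC-density with respect to $p$ exceeds $n$, contradicting the hypothesis. Recall the standard characterization of dp-rank via ict-patterns: $p$ having dp-rank $> n$ yields an ict-pattern of depth $n+1$ over $p$, that is, formulas $\varphi_i(x;y_i)$ for $i \le n$ together with arrays $(b^i_j : i \le n,\ j < \omega)$ with $b^i_j \in \CC_{y_i}$, such that for every $\eta : \{0,\dots,n\} \to \omega$ there is $a_\eta \models p$ with
\[
 \CC \models \varphi_i(a_\eta; b^i_{\eta(i)}) \quad\text{and}\quad \CC \models \neg\varphi_i(a_\eta; b^i_j) \ \ (j \neq \eta(i))
\]
for every $i \le n$. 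The aim is to fold these $n+1$ formulas into one disjunction and count types.

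Set $z = y_0 y_1 \cdots y_n$ and define the combined formula $\psi(x; z) := \bigvee_{i \le n} \varphi_i(x; y_i)$. Fix a large $m < \omega$ and use the column index $m$ itself to supply a uniform ``negative'' parameter in each block: for $i \le n$ and $j < m$ let $c^i_j \in \CC_z$ be the tuple whose $i$-th block is $b^i_j$ and whose $i'$-th block (for $i' \neq i$) is $b^{i'}_m$, and put $B = \{ c^i_j : i \le n,\ j < m\}$, so that $|B| = (n+1)m$. For any $\eta$ with values in $\{0,\dots,m-1\}$, each off-block contributes $\varphi_{i'}(a_\eta; b^{i'}_m)$, which is false because $m \neq \eta(i')$; hence
\[
 \CC \models \psi(a_\eta; c^i_j) \iff \CC \models \varphi_i(a_\eta; b^i_j) \iff j = \eta(i).
\]
In particular, distinct $\eta, \eta'$ differ at some coordinate $i$ (say $\eta(i) \neq \eta'(i)$) and are then separated by $c^i_{\eta(i)}$, so the $\psi$-types $\tp_\psi(a_\eta / B)$ are pairwise distinct as $\eta$ ranges over the $m^{n+1}$ functions $\{0,\dots,n\} \to \{0,\dots,m-1\}$.

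Since each such type is realized by $a_\eta \models p$, it lies in $S_\psi(B) \cap [p]$, whence $|S_\psi(B) \cap [p]| \ge m^{n+1}$ while $|B| = (n+1)m$. If $\psi$ had VC-density $\le n$ with respect to $p$, there would be $K \in \mathbb{R}$ with $m^{n+1} \le |S_\psi(B)\cap[p]| \le K\,|B|^n = K(n+1)^n m^n$ for every $m$, which fails once $m > K(n+1)^n$. Thus $\psi$ has VC-density $> n$ with respect to $p$, the desired contradiction. The only delicate point is securing a single negative parameter per array that works uniformly for all the $a_\eta$; this is what I expect to be the main obstacle, but it is handled cleanly by exploiting that ict-pattern arrays are indexed by all of $\omega$, so the column $b^{i}_m$ lying beyond the range of the restricted indexing functions is automatically negative for every relevant $a_\eta$, and no separate compactness argument is required.
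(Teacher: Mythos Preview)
Your proof is correct and is precisely the standard ict-pattern argument the paper is pointing to: the paper does not give its own proof but simply remarks that this proposition is ``an obvious generalization of the proof of Proposition 3.2 of \cite{DGL},'' and your construction (combining the $n+1$ ict-pattern formulas into a single disjunction and counting types over a carefully padded parameter set) is exactly that generalization. One cosmetic point: you should write $|B| \le (n+1)m$ rather than equality, since the $c^i_j$ need not be pairwise distinct, but this only strengthens the inequality you need.
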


This is a obvious generalization of the proof of Proposition 3.2 of \cite{DGL}.  These open questions are difficult to answer in this general setting.  However, if we restrict ourselves to indiscernible sequences, we can answer both of these questions.

\begin{defn}\label{Defn_VCindDensity}
 Fix $\ell \in \mathbb{R}$.  We say that a formula $\varphi(x;y)$ has \emph{VC${}_{\text{ind}}$-density $\le \ell$ with respect to $p$} if there exists $K \in \mathbb{R}$ such that, for all finite indiscernible sequences $\overline{b} = \langle b_i : i < N \rangle$, 
 \[
  \left| S_\varphi(B) \cap [p] \right| \le K \cdot N^\ell,
 \]
 where $B = \{ b_i : i < N \}$.
\end{defn}

In the remainder of this paper, we prove the following theorems.

\begin{thm}\label{Thm_VCindDensityIntegerValued}
 VC${}_{\text{ind}}$-density is integer valued.
\end{thm}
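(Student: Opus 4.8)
The plan is to exhibit $\VCind$-density as an intrinsically integer-valued rank and then read integrality off that description. I would introduce a local, $\varphi$-relative version of dp-rank, say $\dpR_\varphi(p)$, defined so that $\dpR_\varphi(p) \ge n$ exactly when there is a single realization $a \models p$ together with $n$ mutually indiscernible sequences in the sort of $y$, each of which fails to remain indiscernible over $a$ as tested by $\varphi$. Such a rank lives in $\mathbb{N} \cup \{\infty\}$ by construction, so the entire theorem reduces to the identity
\[
 \VCind\text{-density of } \varphi \text{ with respect to } p \;=\; \dpR_\varphi(p),
\]
which is precisely the coincidence advertised for Section \ref{Sec_VCindDP}. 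The real work is to prove this identity with matching lower and upper bounds at the same integer $d = \dpR_\varphi(p)$.

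For the lower bound ($\ge d$), I would begin from a configuration witnessing $\dpR_\varphi(p) \ge d$ and use mutual indiscernibility to move $a$'s ``cut location'' in each of the $d$ constituent sequences independently while remaining consistent with $p$. Arranging these $d$ sequences into (or reading them off) a single indiscernible sequence $\langle b_i : i < N \rangle$, the $d$ independent choices of cut position produce on the order of $N^d$ distinct $\varphi$-types consistent with $p$, forcing the density to be at least $d$. Reconciling the multi-sequence witness with the single-sequence requirement of Definition \ref{Defn_VCindDensity} is a genuine piece of the argument here.

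For the upper bound ($\le d$), I would fix an indiscernible sequence $\langle b_i : i < N \rangle$ and use NIP to bound the number of alternations of $\varphi(a; b_i)$ in $i$ for each $a \models p$, so that the $\varphi$-type of $a$ over $B$ is coded by a bounded tuple of cut positions. The crux is to show that no realization of $p$ can induce more than $d$ genuinely independent cuts: from the single sequence one extracts mutually indiscernible sub-families and invokes the definition of $\dpR_\varphi(p)$, together with subadditivity (Theorem \ref{Thm_Subadditivity}), to cap the number of independent cut coordinates at $d$. Counting the placements of $d$ cuts among $N$ positions then yields $|S_\varphi(B) \cap [p]| = O(N^d)$.

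I expect the upper bound to be the main obstacle, and within it the delicate passage from a single long indiscernible sequence to $d$ mutually indiscernible sequences: one must argue that the type-count on one sequence is truly governed by the multi-sequence rank $\dpR_\varphi(p)$, which requires careful extraction of mutually indiscernible families and bookkeeping of simultaneous cuts. Once the identity $\VCind$-density $= \dpR_\varphi(p)$ is established, Theorem \ref{Thm_VCindDensityIntegerValued} follows immediately, since $\dpR_\varphi(p) \in \mathbb{N} \cup \{\infty\}$.
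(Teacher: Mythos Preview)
Your overall strategy---identify $\VCind$-density with an integer-valued local dp-rank and read off integrality---is exactly what the paper does. The substantive difference is your choice of definition for the local rank. You take $\dpR_\varphi(p)\ge n$ to mean there exist $n$ \emph{mutually indiscernible} sequences, each non-constant for $\varphi(a;-)$; the paper (Definition~\ref{Defn_LocalDPRank}) instead takes a \emph{single} indiscernible sequence indexed by $\mathbb{Q}$ and counts the cuts $C\subseteq\mathbb{R}$ needed to make $\varphi(a;b_i)$ constant on each $\sim_C$-class. Since $\VCind$-density is itself phrased over a single indiscernible sequence, the paper's definition is the one adapted to the problem: both directions of the equivalence in Theorem~\ref{Thm_RanksCoincide} stay on one sequence, and no translation between the single-sequence and multi-sequence pictures is needed.

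With your definition, the difficulties are reversed from what you expect. The \emph{upper} bound is actually easy and does not require subadditivity: if some $a\models p$ forces more than $n$ cuts on a single indiscernible sequence, then disjoint convex subsequences around $n+1$ of those cuts are automatically mutually indiscernible (this is immediate from indiscernibility of the ambient sequence), and each carries a $\varphi$-alternation, so $\dpR_\varphi(p)\ge n+1$ in your sense. Theorem~\ref{Thm_Subadditivity} concerns dp-rank of types $\tp(a_1,a_2/A)$, not local ranks of formulas, and plays no role here.

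The genuine gap is in your \emph{lower} bound. From $n+1$ mutually indiscernible sequences $I_0,\dots,I_n$ with alternations you can, by mutual indiscernibility, move the cut in each $I_k$ independently and produce $\sim M^{n+1}$ distinct $\varphi$-types over $I_0\cup\cdots\cup I_n$---but that union is \emph{not} an indiscernible sequence, and in general mutually indiscernible sequences cannot be concatenated or interleaved into one. You flag ``reconciling the multi-sequence witness with the single-sequence requirement'' as work to be done, but you give no mechanism, and there is no direct one: the honest fix is to first prove that your multi-sequence rank equals the single-sequence cut rank of Definition~\ref{Defn_LocalDPRank} (a local version of the known equivalence for global dp-rank), and then run the paper's argument for $(4)\Rightarrow(1)$ in Theorem~\ref{Thm_RanksCoincide}. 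At that point the detour through mutually indiscernible sequences has bought nothing; you may as well start from the cut definition.
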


This answers an open question posed by Aschenbrenner, Dolich, Haskell, MacPherson, and Starchenko in \cite{ADHMS}.

\begin{thm}\label{Thm_VCindDensityDPRank}
 For any $n < \omega$, a partial type $p(x)$ has dp-rank $\le n$ if and only if all formulas $\varphi(x;y)$ have VC${}_{\text{ind}}$-density $\le n$ with respect to $p$.
\end{thm}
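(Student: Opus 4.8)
The plan is to factor the biconditional through a \emph{local} dp-rank attached to a single formula. For a partial type $p(x)$ and a formula $\varphi(x;y)$, I would define $\dpR(\varphi,p)$ to be the largest $n$ for which there is a \emph{$\varphi$-ict-pattern of depth $n$ over $p$}: mutually indiscernible sequences $\langle \bb^j_i : i<\omega\rangle$ for $j<n$ together with, for every $\eta \in \omega^n$, a realization $a_\eta \models p$ such that $\models \varphi(a_\eta; \bb^j_i)$ if and only if $i=\eta(j)$. With this notion in hand, the theorem reduces to two equalities,
\[
 \text{dp-rank}(p) = \sup_{\varphi}\dpR(\varphi,p) \quad\text{and}\quad \dpR(\varphi,p) = \VCind(\varphi,p),
\]
since chaining them yields that $p$ has dp-rank $\le n$ iff every $\dpR(\varphi,p)\le n$ iff every $\varphi$ has $\VCind(\varphi,p)\le n$, which is exactly the statement. (As a sanity check, the ``$\Leftarrow$'' direction essentially re-runs the proof of Proposition~\ref{Prop_VCDPRelation} with indiscernible sequences in place of arbitrary parameter sets.)

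For the first equality, the inequality $\ge$ is immediate, since a single-formula pattern is in particular an ict-pattern, so $\dpR(\varphi,p)\le \text{dp-rank}(p)$ for every $\varphi$. For $\le$, I would start from an arbitrary ict-pattern of depth $n$, which uses only finitely many formulas $\varphi_0,\dots,\varphi_{n-1}$, and amalgamate them into a single formula by tagging coordinates: set $\varphi(x;y_0\cdots y_{n-1},w):=\bigvee_{j<n}\bigl(w=c_j \wedge \varphi_j(x;y_j)\bigr)$ for fixed distinct $c_j$, and replace the $j$-th row by the sequence whose elements carry $\bb^j_i$ in the $j$-th block, fixed dummies elsewhere, and constant tag $c_j$. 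Plugging in such an element selects exactly the $j$-th disjunct, so $\varphi$ restricts to $\varphi_j(x;\bb^j_i)$ on the $j$-th row; the tagged rows remain mutually indiscernible because the added coordinates are constant along and across rows, and each is broken by the single formula $\varphi$. Hence $\dpR(\varphi,p)\ge n$, which is the sense in which local dp-rank is the ``correct'' localization.

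For the second equality I would argue both inequalities by counting \emph{realized} types. For $\VCind(\varphi,p)\ge \dpR(\varphi,p)$: given a $\varphi$-ict-pattern of depth $n$ with rows of length $m$, the $m^n$ realizations $a_\eta$ induce pairwise distinct $\varphi$-types on the parameter array, so the remaining task is to repackage that array, which is a union of $n$ mutually indiscernible sequences, as a single indiscernible sequence $\bb=\langle b_i:i<N\rangle$ of length $N=O(nm)$ still carrying $\gtrsim m^n$ realized types. I would do this by an Ehrenfeucht--Mostowski/Ramsey extraction: arrange the array in a product order, pass to an order-indiscernible sequence of the composite tuples, and verify that consistency of the relevant $\varphi$-patterns survives, giving the required lower bound on $|S_\varphi(B)\cap[p]|$ as a polynomial of degree $n$ in $N$. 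For the reverse inequality, I would fix an indiscernible $\bb$ of length $N$, assume more than $KN^n$ realized $\varphi$-types occur for every $K$, use NIP to write the trace of each realization on $\bb$ as a union of boundedly many intervals, and then pigeonhole and thin this super-polynomial family (again via Ramsey, after extracting subsequences that become mutually indiscernible) into a $\varphi$-ict-pattern of depth $n+1$ over $p$, contradicting $\dpR(\varphi,p)=n$.

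The main obstacle is exactly this matching of \emph{polynomial degree} to rank, and within it the passage between one indiscernible sequence and a family of mutually indiscernible ones. The delicate point is that only realized types may be counted: as the interval formula $y_1<x<y_2$ over an increasing sequence of pairs in DLO already shows, the combinatorially possible traces of a formula vastly overcount the realized ones, so both extraction arguments must be engineered to preserve realizability of the \emph{full} $n$-dimensional pattern rather than merely the shape of individual traces. Once both equalities are established, Theorem~\ref{Thm_VCindDensityDPRank} follows from the displayed chain, and Theorem~\ref{Thm_VCindDensityIntegerValued} is then immediate, since $\dpR(\varphi,p)$ is an integer by construction and $\VCind(\varphi,p)=\dpR(\varphi,p)$.
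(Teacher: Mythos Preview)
Your overall architecture---factoring the biconditional through a local rank for a single formula, then proving two equalities---is exactly what the paper does, and your first equality (global dp-rank $=\sup_\varphi$ local rank) is handled essentially as in Proposition~\ref{Prop_DPRankWorks}. The substantive divergence is in \emph{which} localization you choose. You take $\dpR(\varphi,p)$ to be the maximal depth of a $\varphi$-ict-pattern built from \emph{mutually} indiscernible rows; the paper instead localizes via a \emph{single} indiscernible sequence $\langle b_i:i\in\mathbb{Q}\rangle$ and a set of at most $n$ cuts $C\subseteq\mathbb{R}$ such that $\varphi(a;b_i)$ is constant on each $\sim_C$-class (Definition~\ref{Defn_LocalDPRank}). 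Since $\VCind$ is itself measured on single indiscernible sequences, that choice makes the hard direction nearly free: if the single-sequence local rank exceeds $n$, one reads off $\binom{N}{n+1}$ distinct realized $\varphi$-types on any length-$N$ subsequence by sliding the $n{+}1$ transition points, with no repackaging at all (this is $(4)\Rightarrow(1)$ of Theorem~\ref{Thm_RanksCoincide}). For the converse the paper does not pigeonhole toward an ict-pattern but instead routes through an intermediate $\mathrm{UDTFS}_{\mathrm{ind}}$-rank, obtaining uniform definitions of $\varphi$-types from a bounded-alternation claim proved by compactness.

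The step in your plan that I do not see how to carry out as written is precisely the repackaging you flag as the main obstacle. ``Arrange the array in a product order and pass to an order-indiscernible sequence of the composite tuples'' does not yield what you need under either reading. If the composite tuples are the $m$ columns $(b^0_i,\dots,b^{n-1}_i)$, you have changed the parameter sort and are now bounding $\VCind$ of a different formula (and the length is $m$, not $O(nm)$). If instead you list the $nm$ entries individually and Ramsey-extract an indiscernible subsequence, nothing prevents the survivors from lying in a single row, destroying the pattern; in general the rows have distinct EM-types, so no interleaving of them is indiscernible. The honest repair is to first prove that your ict-pattern localization coincides with the single-sequence localization (a $\varphi$-local version of the standard equivalence of dp-rank characterizations), after which the repackaging problem disappears---but at that point you are running the paper's argument. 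Your sketch for the reverse inequality has the same feature: once you observe that super-polynomially many realized traces force some $a\models p$ whose trace on $\overline{b}$ has more than $n$ alternations, you have exactly the single-sequence witness the paper uses, and the further extraction of mutually indiscernible rows is an unnecessary detour.
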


This answers Open Question \ref{Ques_VCDenDPMin} for VC${}_{\text{ind}}$-density.  As a corollary of Theorem \ref{Thm_VCindDensityDPRank} and Theorem 4.8 of \cite{KOUdpmin}, we also answer Open Question \ref{Ques_VCDenAdd} for VC${}_{\text{ind}}$-density.

\begin{cor}\label{Cor_VCindDenAdd}
 Fix $k < \omega$ and suppose that, for all $\varphi(x;y)$ with $|x| = 1$, the VC${}_{\text{ind}}$-density of $\varphi(x;y)$ is $\le k$.  Then, for all $\varphi(x;y)$, the VC${}_{\text{ind}}$-density of $\varphi$ is $\le k \cdot |x|$.
\end{cor}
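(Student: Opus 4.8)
The plan is to use Theorem \ref{Thm_VCindDensityDPRank} as a translation device between VC${}_{\text{ind}}$-density and dp-rank, so that the actual combinatorial work is carried entirely by the subadditivity of dp-rank (Theorem \ref{Thm_Subadditivity}). Throughout, unqualified VC${}_{\text{ind}}$-density means VC${}_{\text{ind}}$-density with respect to the type $x = x$, so that Theorem \ref{Thm_VCindDensityDPRank} applies with $p(x)$ taken to be $x = x$.

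First I would unpack the hypothesis. For a single variable $x_i$, the assumption that every $\varphi(x_i; y)$ has VC${}_{\text{ind}}$-density $\le k$ is exactly the statement that every formula has VC${}_{\text{ind}}$-density $\le k$ with respect to the type $x_i = x_i$. By the ($\Leftarrow$) direction of Theorem \ref{Thm_VCindDensityDPRank} (with $n = k$), this yields that $x_i = x_i$ has dp-rank $\le k$; since the hypothesis ranges over all single-variable formulas, this holds for each coordinate $x_i$ of $x$, in whatever sort it lives.

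Next I would apply subadditivity. Writing $x = (x_1, \dots, x_m)$ with $m = |x|$, Theorem \ref{Thm_Subadditivity} bounds the dp-rank of the full tuple by the sum of the dp-ranks of the coordinates, so the type $x = x$ has dp-rank $\le \sum_{i < m} k = k \cdot |x|$. Finally, feeding this back through the ($\Rightarrow$) direction of Theorem \ref{Thm_VCindDensityDPRank}, now with $n = k \cdot |x|$ and $p(x)$ again equal to $x = x$, gives that every $\varphi(x; y)$ has VC${}_{\text{ind}}$-density $\le k \cdot |x|$, which is the desired conclusion.

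The only real obstacle is bookkeeping in the middle step: Theorem \ref{Thm_Subadditivity} (Theorem 4.8 of \cite{KOUdpmin}) is phrased in terms of the dp-rank of realizations over parameter sets, so I would first record the special case that the dp-rank of an $m$-tuple of singletons is at most the sum of the dp-ranks of those singletons, and confirm that this is exactly what is needed once each singleton type $x_i = x_i$ has dp-rank $\le k$. Everything else is a direct double application of Theorem \ref{Thm_VCindDensityDPRank}, so no new estimates on $\left| S_\varphi(B) \cap [p] \right|$ are required, and the result follows as a straightforward corollary.
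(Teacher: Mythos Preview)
Your proposal is correct and follows exactly the route the paper takes: translate the hypothesis into a dp-rank bound via Theorem \ref{Thm_VCindDensityDPRank}, apply subadditivity (Theorem \ref{Thm_Subadditivity}) to pass from singletons to tuples, and translate back. The bookkeeping remark about Theorem \ref{Thm_Subadditivity} being stated for complete types over a parameter set is handled just as you indicate, by taking $A=\emptyset$ and iterating.
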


\section{Local dp-rank}\label{Sec_LocalDPRank}

In order to prove Theorem \ref{Thm_VCindDensityIntegerValued} and Theorem \ref{Thm_VCindDensityDPRank}, we introduce a notion of local dp-rank, which is just the obvious localization of dp-rank.  This will turn out to be exactly equal to VC${}_{\text{ind}}$-density.

In this section, we borrow notation from Chapters 3 and 4 of \cite{SimonBook}.  Let $( I; < )$ be a linear order and let $\comp(I)$ denote the completion of $I$.  For any $C \subseteq \comp(I)$, let $\sim_C$ be the equivalence relation on $I$ defined as follows.  For $i, j \in I$, $i \sim_C j$ if and only if, for all $c \in C$, we have that $c \le i \Leftrightarrow c \le j$ and $i \le c \Leftrightarrow j \le c$.  Notice that, for any $C$, this is a convex equivalence relation on $(I; <)$.

\begin{defn}\label{Defn_dpRank}
 Fix a partial type $p(x)$ and $n < \omega$.  We say that $p$ has \emph{dp-rank} $\le n$ if, for all $a \models p$ and all indiscernible sequences $\overline{b} = \langle b_i : i \in \mathbb{Q} \rangle$, there exists $C \subseteq \mathbb{R}$ with $|C| \le n$ such that, for all $i, j \in \mathbb{Q}$ with $i \sim_C j$, $\tp(b_i / a) = \tp(b_j / a)$.
\end{defn}

This definition is equivalent to the standard definition of dp-rank using ICT-patterns by Proposition 4.20 of \cite{SimonBook}, which is a simple generalization of of Lemma 1.4 of \cite{Simon}.  This is also equivalent to the definition of dp-rank using mutually indiscernible sequences, as in \cite{KOUdpmin}.  We turn now to a localization of this definition.

Let $p(x)$ be any partial type and let $\varphi(x;y)$ be any formula.

\begin{defn}\label{Defn_LocalDPRank}
 For some $n < \omega$, we say that the \emph{(local) dp-rank of $\varphi$ with respect to $p$} is $\le n$ if, for all $a \models p$ and all indiscernible sequences $\overline{b} = \langle b_i : i \in \mathbb{Q} \rangle$ in $\CC_y$, there exists $C \subseteq \mathbb{R}$ with $|C| \le n$ such that, $i, j \in \mathbb{Q}$ with $i \sim_C j$, $\models \varphi(a; b_i) \leftrightarrow \varphi(a; b_j)$.
\end{defn}

This local dp-rank is the correct localization of dp-rank in the following sense.

\begin{prop}\label{Prop_DPRankWorks}
 A partial type $p(x)$ has dp-rank $\le n$ if and only if, for all formulas $\varphi(x;y)$, $\varphi$ has local dp-rank $\le n$ with respect to $p$.
\end{prop}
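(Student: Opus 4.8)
The plan is to prove the two implications separately, with the forward direction being essentially immediate and the reverse direction carrying all of the content.

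For the forward direction, suppose $p$ has dp-rank $\le n$. Fix any formula $\varphi(x;y)$, any $a \models p$, and any indiscernible sequence $\langle b_i : i \in \mathbb{Q} \rangle$ in $\CC_y$. Since this is in particular an indiscernible sequence in the sense of Definition \ref{Defn_dpRank}, I obtain $C \subseteq \mathbb{R}$ with $|C| \le n$ such that $i \sim_C j$ implies $\tp(b_i/a) = \tp(b_j/a)$, and hence $\models \varphi(a;b_i) \bic \varphi(a;b_j)$. The same $C$ witnesses that $\varphi$ has local dp-rank $\le n$ with respect to $p$, so nothing further is needed here.

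For the reverse direction I argue by contraposition: assuming $p$ has dp-rank $> n$, I produce a single formula of local dp-rank $> n$ with respect to $p$. The naive attempt is to fix a witnessing $a$ and indiscernible $\langle b_i\rangle$ across which $\tp(b_i/a)$ changes at $\ge n+1$ cuts, choose a formula witnessing each change, and hope one of them (or a Boolean combination) alternates $n+1$ times along that sequence. This fails for a parity reason, which I expect to be the main obstacle: the type may \emph{return} to an earlier value along the sequence, and any single formula, being constant on each constant-type interval, alternates an even number of times across such a return, whereas the number of type-change cuts there can be odd. Taking $\bigcup_\varphi C_\varphi$ is equally hopeless, as that union may be far larger than $n$. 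The resolution is not to work on the given sequence at all, but to manufacture a fresh one from an ICT-pattern.

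Concretely, I invoke the equivalence of Definition \ref{Defn_dpRank} with the ICT-pattern formulation (Proposition 4.20 of \cite{SimonBook}): dp-rank $> n$ yields an ICT-pattern of depth $n+1$, namely formulas $\psi_0(x;y_0),\dots,\psi_n(x;y_n)$ and rows $\langle b^k_j : j \in \mathbb{Q}\rangle$, which after the standard Ramsey extraction I may take to be mutually indiscernible (so that the columns $d_j = (b^0_j,\dots,b^n_j)$ form an indiscernible sequence in the product sort $\CC_{y_0\cdots y_n}$), such that for every $\eta \colon \{0,\dots,n\}\to\mathbb{Q}$ the partial type $\{\psi_k(x;b^k_{\eta(k)}) : k \le n\} \cup \{\neg\psi_k(x;b^k_j) : k\le n,\ j\ne\eta(k)\}$ is consistent with $p(x)$. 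Set $\varphi(x; y_0\cdots y_n) = \bigvee_{k\le n} \psi_k(x;y_k)$. Taking $\eta(k)=k$ and $a\models p$ realizing the corresponding instance, I get $\models \psi_k(a;b^k_j)$ exactly when $j=k$, hence $\models \varphi(a;d_j)$ exactly when $j \in \{0,1,\dots,n\}$. Each integer $k\le n$ is isolated within $\mathbb{Q}$, so $\varphi(a;d_j)$ is true at $n+1$ columns each flanked by false columns; this pattern alternates more than $n$ times, so no $C$ with $|C|\le n$ can be constant on all $\sim_C$-classes of $\langle d_j\rangle$. Thus $\varphi$ has local dp-rank $> n$ with respect to $p$, giving the desired contradiction and completing the proof. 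The only points requiring care are the extraction guaranteeing an honestly indiscernible column sequence while preserving the consistency clause, which is routine, and the recognition — already discussed — that the disjunction $\bigvee_k \psi_k$ is precisely what converts $n+1$ independent one-variable patterns into $n+1$ genuine alternations of a single formula, thereby evading the parity obstruction that blocks the original sequence.
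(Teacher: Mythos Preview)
Your proof is correct, but the reverse direction takes a genuinely different route from the paper's. You pass through the ICT-pattern characterization, extract mutually indiscernible rows, and evaluate the disjunction $\bigvee_k \psi_k$ on the resulting indiscernible column sequence; this is clean and leans on the cited equivalence from \cite{SimonBook} together with the standard facts that mutual indiscernibility of rows yields indiscernibility of columns and that the ICT consistency clause survives the extraction. The paper, by contrast, argues directly (not by contraposition) and does precisely what you dismissed as ``equally hopeless'': it forms $C = \bigcup_\varphi C_\varphi$ for the \emph{minimal} cut sets $C_\varphi$, observes that if $|C| > n$ then already finitely many formulas $\varphi_0,\dots,\varphi_n$ have $\bigl|\bigcup_{k\le n} C_{\varphi_k}\bigr| > n$, and then packages these into the single formula
\[
\psi(x;y_1,y_2) \;=\; \bigvee_{k\le n}\bigl[\varphi_k(x;y_1)\leftrightarrow \neg\varphi_k(x;y_2)\bigr]
\]
on \emph{pairs} from the original sequence. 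This $\psi$ detects any change in any $\varphi_k$ between two elements, so along a suitable subsequence of consecutive pairs it holds at $n+1$ isolated spots, and compactness then gives local dp-rank $>n$ for $\psi$. The pairing trick is exactly what defuses your parity obstruction without ever leaving the given indiscernible sequence or invoking ICT-patterns. Your approach buys conceptual transparency and a short argument once the external machinery is granted; the paper's approach buys self-containment relative to Definition~\ref{Defn_dpRank} and shows that the union $\bigcup_\varphi C_\varphi$ you set aside is in fact the decisive object.
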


\begin{proof}
 Suppose $p$ has dp-rank $\le n$.  Fix $\varphi(x;y)$, $a \models p$, and $\overline{b} = \langle b_i : i \in \mathbb{Q} \rangle$ indiscernible.  By definition, there exists $C \subseteq \mathbb{R}$ with $|C| \le n$ so that, for all $i, j \in \mathbb{Q}$ with $i \sim_C j$, $\tp(b_i / a) = \tp(b_j / a)$.  In particular, $\models \varphi(a; b_i) \leftrightarrow \varphi(a; b_j)$.
 
 Suppose that, for all formulas $\varphi(x;y)$, $\varphi$ has local dp-rank $\le n$ with respect to $p$.  Fix $a \models p$, and $\overline{b} = \langle b_i : i \in \mathbb{Q} \rangle$ indiscernible.  Then, for each $\varphi(x;y)$, there exists a $C_\varphi \subseteq \mathbb{R}$ with $|C_\varphi| \le n$ such that, $i, j \in \mathbb{Q}$ with $i \sim_{C_\varphi} j$, $\models \varphi(a; b_i) \leftrightarrow \varphi(a; b_j)$.  Suppose that $C_\varphi$ is minimal such, and it is easy to see that such a set is unique.  Let $C = \bigcup_{\varphi(x;y)} C_\varphi$.  Clearly, for all $i, j \in \mathbb{Q}$ with $i \sim_C j$, we have $\tp(b_i / a) = \tp(b_j / a)$.  Therefore, it suffices to show that $|C| \le n$.  By means of contradiction, suppose $|C| > n$.  In particular, there exists $\varphi_0(x;y), ..., \varphi_n(x;y)$ so that
 \[
  \left| \bigcup_{k \le n} C_{\varphi_k} \right| > n.
 \]
 Let $\psi(x; y_1, y_2) = \bigvee_{k \le n} [ \varphi_k(x; y_1) \leftrightarrow \neg \varphi_k(x; y_2) ]$.  There exists a subsequence $\overline{b}'$ of pairs of $\overline{b}$, indexed by $(n+1) \times \mathbb{Z}$ ordered lexicographically, so that for $k \le n$ and $m \in \mathbb{Z}$, $\models \psi(a; b'_{k,m})$ if and only if $m = 0$.  Using compactness, this shows that $\psi$ has local dp-rank $> n$ with respect to $p$, contrary to assumption.
\end{proof}

We say that a theory is \emph{strongly dependent} if, for all partial types $p(x)$, the dp-rank of $p$ is finite (see \cite{Shelah863} or Section 4.3 of \cite{SimonBook}).  There are theories that are dependent but not strongly dependent.  However, a formula $\varphi(x;y)$ is dependent if and only if it has finite dp-rank with respect to $x=x$.  We use an example to illustrate this difference between local and global definitions.

\begin{expl}\label{Expl_InfiniteDPRank}
 Consider the language $L = \{ E_i : i < \omega \}$ with countably many binary relations and let $T$ be the $L$-theory where each $E_i$ is an equivalence relation and the intersection of any number of classes is infinite.  Clearly this theory is dependent (in fact, stable), but it is not strongly dependent.  However, any formula has finite local dp-rank.  The issue is that there are formulas with arbitrarily large local dp-rank; for example, consider
 \[
  \varphi_n(x; y_0, y_1, ..., y_{n-1}) = \bigwedge_{i < n} E_i(x; y_i).
 \]
\end{expl}

\section{Local dp-rank and VC${}_{\text{ind}}$-density coincide}\label{Sec_VCindDP}

Before we show that local dp-rank and VC${}_{\text{ind}}$-density coincide, we introduce another rank which is an indiscernible version of UDTFS-rank (see, for example, \cite{Guingona2}, \cite{GuinLas}, or \cite{ADHMS}).  Fix $p(x)$ a partial type and $\varphi(x;y)$ a formula.  For some $B \subseteq \CC_y$, $q(x) \in S_\varphi(B) \cap [p]$, and formula $\psi(y)$, we say that $\psi$ \emph{defines} $q(x)$ if, for all $b \in B$, $\models \psi(b)$ if and only if $\varphi(x; b) \in q(x)$.

\begin{defn}\label{Defn_UDTFSindRank}
 For $n < \omega$, we say that $\varphi$ has \emph{UDTFS${}_{\text{ind}}$-rank $\le n$ with respect to $p$} if there exists a finite set of formulas
 \[
  \Psi = \{ \psi_r(y; z_{1,0}, ..., z_{1,\ell}, ..., z_{n,0}, ..., z_{n,\ell}, w_1, ..., w_k) : r < R \}
 \]
 such that, for all finite indiscernible sequences $\overline{b} = \langle b_i : i < N \rangle$, there exists $j_1, ..., j_k < N$ such that, for all $q(x) \in S_\varphi(\overline{b}) \cap [p]$, there exists $i_1, ..., i_n < N$ and $r < R$ so that
 \[
  \psi_r(y; b_{i_1}, ..., b_{i_1 + \ell}, ..., b_{i_n}, ..., b_{i_n+\ell}, b_{j_1}, ..., b_{j_k}) \text{ defines } q(x).
 \]
\end{defn}

\begin{lem}\label{Lem_EasyDirection}
 If $\varphi$ has UDTFS${}_{\text{ind}}$-rank $\le n$ with respect to $p$, then it has VC${}_{\text{ind}}$-density $\le n$ with respect to $p$.
\end{lem}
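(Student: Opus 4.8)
The plan is to turn the defining scheme directly into a counting bound: I will exhibit, for each finite indiscernible sequence $\overline{b} = \langle b_i : i < N \rangle$, an injection from $S_\varphi(\overline{b}) \cap [p]$ into a finite index set of size at most $R \cdot N^n$, and then read off the constant $K$ for the VC${}_{\text{ind}}$-density bound. First I would fix a finite set $\Psi = \{ \psi_r(y; \ldots) : r < R \}$ witnessing that $\varphi$ has UDTFS${}_{\text{ind}}$-rank $\le n$ with respect to $p$, where $R = |\Psi|$. Given $\overline{b}$, the definition supplies fixed indices $j_1, \ldots, j_k < N$ (depending only on $\overline{b}$, not on the individual type) such that every $q \in S_\varphi(\overline{b}) \cap [p]$ is defined by some instance $\psi_r(y; b_{i_1}, \ldots, b_{i_1 + \ell}, \ldots, b_{i_n}, \ldots, b_{i_n + \ell}, b_{j_1}, \ldots, b_{j_k})$ for suitable $r < R$ and $i_1, \ldots, i_n < N$.

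The heart of the argument is the following assignment. For each $q \in S_\varphi(\overline{b}) \cap [p]$, choose one witnessing pair, setting $f(q) = (r, (i_1, \ldots, i_n))$. I claim $f$ is injective. Suppose $f(q) = f(q') = (r, (i_1, \ldots, i_n))$. Then $q$ and $q'$ are both defined by the \emph{same} formula with the \emph{same} parameters, since the $w$-coordinates $b_{j_1}, \ldots, b_{j_k}$ are common to both. By the meaning of ``defines,'' for every $b \in B$ we have $\varphi(x; b) \in q$ if and only if $\models \psi_r(b; \ldots)$ if and only if $\varphi(x; b) \in q'$. Because a $\varphi$-type over $B$ is completely determined by which of the formulas $\varphi(x; b)$ it contains, it follows that $q = q'$. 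This is the key point: a defining formula with its parameters fixed pins down the type uniquely, so distinct types must receive distinct witnesses.

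It remains to count the codomain of $f$. There are $R$ choices for $r$, and for each block starting index $i_m$ there are at most $N$ possibilities, so the tuples $(i_1, \ldots, i_n)$ number at most $N^n$; crucially, the fixed parameters $b_{j_1}, \ldots, b_{j_k}$ contribute nothing to this count, as they do not vary with $q$. Injectivity of $f$ then gives $\left| S_\varphi(\overline{b}) \cap [p] \right| \le R \cdot N^n$. Taking $K = R$ yields VC${}_{\text{ind}}$-density $\le n$ with respect to $p$, as required. Since this is the ``easy direction,'' there is no genuine obstacle; the only care needed is to confirm that the defining instance determines the type (so that $f$ is injective) and that the per-sequence parameters $j_1, \ldots, j_k$ do not inflate the exponent beyond $n$.
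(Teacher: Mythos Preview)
Your proposal is correct and is precisely the counting argument the paper has in mind; the paper's own proof is a single sentence (``a simple count reveals that $|S_\varphi(B)\cap[p]|\le L\cdot|B|^n$''), and you have simply unpacked that count, correctly noting that the per-sequence parameters $b_{j_1},\ldots,b_{j_k}$ are fixed and hence do not affect the exponent.
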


\begin{proof}
 Under the hypothesis, a simple count reveals that $| S_\varphi(B) \cap [p] | \le L \cdot |B|^n$ (where $B = \{ b_i : i < N \}$).
\end{proof}

We now show that local dp-rank, VC${}_{\text{ind}}$-density, and UDTFS${}_{\text{ind}}$-rank coincide.

\begin{thm}\label{Thm_RanksCoincide}
 For a formula $\varphi(x;y)$, a partial type $p(x)$, and $n < \omega$, the following are equivalent:
 \begin{enumerate}
  \item $\varphi$ has local dp-rank $\le n$ with respect to $p$.
  \item $\varphi$ has UDTFS${}_{\text{ind}}$-rank $\le n$ with respect to $p$.
  \item $\varphi$ has VC${}_{\text{ind}}$-density $\le n$ with respect to $p$.
  \item $\varphi$ has VC${}_{\text{ind}}$-density $\le \ell$ with respect to $p$ for some $\ell \in \mathbb{R}$ with $n \le \ell < n+1$.
 \end{enumerate}
\end{thm}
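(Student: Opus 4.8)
The plan is to prove the cycle of implications $(1) \Rightarrow (2) \Rightarrow (3) \Rightarrow (4) \Rightarrow (1)$, which suffices for the stated four-way equivalence. Two of the links come for free: $(2) \Rightarrow (3)$ is exactly Lemma \ref{Lem_EasyDirection}, and $(3) \Rightarrow (4)$ is immediate, taking $\ell = n$. The content lies in $(1) \Rightarrow (2)$ and $(4) \Rightarrow (1)$, which supply the definability (upper-bound) and counting (lower-bound) halves respectively, and which together force the integrality phenomenon separating $n$ from $n+1$.

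For $(1) \Rightarrow (2)$, I would begin from the observation that, by Definition \ref{Defn_LocalDPRank}, local dp-rank $\le n$ says precisely that for any $a \models p$ and any indiscernible $\langle b_i : i \in \mathbb{Q}\rangle$ the truth value of $\varphi(a; b_i)$ is constant on the at most $n+1$ convex classes cut out by some $C$ with $|C| \le n$. Restricted to a finite indiscernible sequence, each consistent $\varphi$-type $q \in S_\varphi(\overline{b}) \cap [p]$ is therefore a ``step pattern'' with at most $n$ transitions. The task is to recover such a pattern syntactically and \emph{uniformly}. The key step is a compactness argument showing there is a single block length $\ell$ and a finite list $\Psi = \{\psi_r : r < R\}$ such that the $\varphi$-behavior on either side of a cut is determined by a block $b_{i_m}, \dots, b_{i_m + \ell}$ of $\ell+1$ consecutive elements straddling it, while the global reference parameters $b_{j_1}, \dots, b_{j_k}$ (near the ends of the sequence, chosen once for the whole sequence) fix the orientation and the in/out pattern of the regions; the finitely many $\psi_r$ then account for the finitely many combinatorial shapes a $\le n$-transition pattern can take. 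The main obstacle here is obtaining the \emph{uniform} $\ell$ and finite $R$: one must argue, via compactness over the EM-types, that there are only boundedly many ``cut types'' and that each is detectable by a bounded block, so that no unbounded block length is ever required.

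For $(4) \Rightarrow (1)$ I would argue contrapositively, assuming local dp-rank $> n$ and producing enough $\varphi$-types to defeat every density bound $\le \ell$ with $\ell < n+1$. By Definition \ref{Defn_dpRank} there are $a \models p$ and an indiscernible $\langle b_i : i \in \mathbb{Q}\rangle$ admitting no $C$ of size $\le n$; fixing such a $C$ of minimal size $\ge n+1$, minimality makes each of the $n+1$ cuts \emph{necessary}, i.e.\ it genuinely separates adjacent elements on which $\varphi(a; \cdot)$ disagrees. Now I would exploit the homogeneity of the indiscernible sequence: any increasing tuple of $n+1$ cuts is order-isomorphic to any other, so applying order-automorphisms of the sequence I can realize, for every increasing choice of positions $i_1 < \dots < i_{n+1}$ in a finite subsequence of length $N$, a realization whose $\varphi$-transitions fall exactly at those positions. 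Distinct position-tuples give distinct $\varphi$-types, whence $|S_\varphi(B) \cap [p]| \ge \binom{N}{n+1} \ge c\, N^{n+1}$, which exceeds $K N^{\ell}$ for every $K$ and every $\ell < n+1$ once $N$ is large; thus $\varphi$ fails VC${}_{\text{ind}}$-density $\le \ell$.

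Between the two, I expect the uniform definability in $(1) \Rightarrow (2)$ to be the principal technical obstacle, since it is there that the semantic ``few cuts'' condition must be converted into a single finite defining scheme with a uniform block length $\ell$ — the genuinely UDTFS-style content. By contrast $(4) \Rightarrow (1)$ is a comparatively clean counting argument once one secures that the $n+1$ cuts are necessary and that indiscernible homogeneity lets the increasing cut-tuple slide freely; the point demanding care there is verifying that necessity really yields an actual $\varphi$-transition between specific adjacent elements, so that distinct placements produce distinct types. The chain $(1) \Rightarrow (2) \Rightarrow (3)$ together with $(4) \Rightarrow (1)$ then yields the integrality conclusion, namely that a VC${}_{\text{ind}}$-density value in the open interval $(n, n+1)$ cannot occur.
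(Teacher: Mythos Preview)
Your plan matches the paper's proof almost exactly: the same cycle $(1)\Rightarrow(2)\Rightarrow(3)\Rightarrow(4)\Rightarrow(1)$, the same compactness extraction of a uniform block length $\ell$ for $(1)\Rightarrow(2)$, and the same contrapositive counting via $\binom{N}{n+1}$ for $(4)\Rightarrow(1)$.

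Two points the paper handles that your sketch glosses over. First, in $(1)\Rightarrow(2)$ the reference parameters $b_{j_1},\dots,b_{j_k}$ are not merely ``near the ends to fix orientation'': the paper invokes a known result (Theorem~1.2(ii) of \cite{Guingona2}) giving finitely many formulas $\theta_s(y_1,y_2;w_1,\dots,w_k)$ which \emph{define} the indiscernible order, and the $\psi_{f,s}$ are built on top of that. Crucially, this fails when $\overline{b}$ is a totally indiscernible \emph{set} (no definable order exists), and the paper treats that case separately with formulas $\psi^*_f$ that simply list the at most $n\ell$ exceptional indices --- your sketch does not account for this bifurcation, and without it the scheme breaks on indiscernible sets. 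Second, in $(4)\Rightarrow(1)$ you take a minimal $C$ with $|C|\ge n+1$, but you must first dispose of the possibility that no finite $C$ works at all; the paper notes that infinite $C$ forces $\varphi$ to have IP, hence infinite VC$_{\text{ind}}$-density, before proceeding to the finite case and the $n+2$ convex classes $D_0<\dots<D_{n+1}$. Your awareness that ``necessity'' of a cut may mean either a value change between $D_\ell$ and $D_{\ell+1}$ or an isolated exceptional point $j_\ell$ between them is exactly right and is how the paper handles it.
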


\begin{proof}
 (1) $\Rightarrow$ (2): Suppose $\varphi$ has local dp-rank $\le n$ with respect to $p$.
 
 \vspace{5pt}
 
 \noindent \textbf{Claim.} There exists $\ell < \omega$ such that, for any finite indiscernible sequence $\overline{b} = \langle b_i : i < N \rangle$ and all $a \models p$, there exists $i_1 < ... < i_n < N$ such that, for all $k \le n$ and all $j, j'$ with $i_k + \ell < j < j' < i_{k+1}$, $\models \varphi(a; b_j) \leftrightarrow \varphi(a; b_{j'})$ (for $k = 0, n$, let $i_0+\ell = -1$ and $i_n = N$).
 
 \begin{proof}[Proof of Claim.]
  Suppose not.  Then, for each $\ell < \omega$, we can find some $\overline{b} = \langle b_i : i < N \rangle$ and $a \models p$ for which no such $i_1 < ... < i_n < N$ exist.  A simple compactness argument then shows that we can build an indiscernible sequence $\overline{b} = \langle b_i : i \in \mathbb{Q} \rangle$ and $a \models p$ which witnesses the fact that $\varphi$ has dp-rank $> n$ with respect to $p$.
 \end{proof}
 
 From the claim it is easy to explicitly show $\varphi$ has UDTFS${}_{\text{ind}}$-rank $\le n$ with respect to $p$.  As in the proof of Theorem 1.2 (ii) of \cite{Guingona2}, there exists finitely many formulas $\theta_s(y_1, y_2; w_1, ..., w_k)$ for $s < S$ such that, for any finite indiscernible sequence $\overline{b}$, either $\overline{b}$ is a indiscernible set or there exists $j_1, ..., j_k$ and $s < S$ so that $\theta_s(y_1, y_2; b_{j_1}, ..., b_{j_k})$ defines the indiscernible ordering of $\overline{b}$ (i.e., $\langle b_i, b_{i'} \rangle$ holds of this formula if and only if $i < i'$).
 
 We need now a formula for each possible configuration of truth values between a finite indiscernible sequence $\overline{b}$ and $a \models p$.  Consider the set
 \[
  X = (\{ 0 \} \times \{ 0, ..., n \}) \cup (\{ 1, ..., n \} \times \{ 0, ..., \ell \})
 \]
 and, for each $f : X \rightarrow \{ 0, 1 \}$ and $s < S$, define the formula
 \[
  \psi_{f,s}(y; z_{1,0}, ..., z_{1,\ell}, ..., z_{n,0}, ..., z_{n,\ell}, w_1, ..., w_k)
 \]
 which satisfies:
 \begin{itemize}
  \item [(i)] If $y = z_{i,j}$, then $\psi_{f,s}$ holds iff. $f(i,j) = 1$,
  \item [(ii)] If $y$ falls in between $z_{i,\ell}$ and $z_{i+1,0}$ via the ordering given by $\theta_s(y_1, y_2; w_1, ..., w_k)$, then $\psi_{f,s}$ holds iff. $f(0,i) = 1$.
 \end{itemize}
 These formulas take care of the case where we are dealing with an indiscernible sequence that is not an indiscernible set.  For the other case, we define formulas $\psi^*_f(y; z_1, ..., z_{n\ell})$ for each $f : \{ 0, 1, ..., n \ell \} \rightarrow \{ 0, 1 \}$ so that
 \begin{itemize}
  \item [(i)] For $i > 0$, if $y = z_i$, $\psi^*_f$ holds iff. $f(i) = 1$,
  \item [(ii)] For $y \neq z_i$ for all $i > 0$, $\psi^*_f$ holds iff. $f(0) = 1$.
 \end{itemize}
 Now, for any finite indiscernible sequence $\overline{b}$ and $a \models p$, let $i_1 < ... < i_n < N$ be given as in the claim.  If $\overline{b}$ is not an indiscernible set, take $j_1, ..., j_k < N$ and $s < S$ so that $\theta_s(y_1, y_2; b_{j_1}, ..., b_{j_k})$ defines the indiscernible ordering.  Choosing the appropriate $f : X \rightarrow \{ 0, 1 \}$ will show us that
 \[
  \psi_{f,s}(y; b_{i_1}, ..., b_{i_1 + \ell}, ..., b_{i_n}, ..., b_{i_n+\ell}, b_{j_1}, ..., b_{j_k}) \text{ defines } \tp_\varphi(a/B),
 \]
 as desired.  In the case where $\overline{b}$ is an indiscernible set, we get a similar result with $\psi^*_f$.
 
 (2) $\Rightarrow$ (3): Follows immediately from Lemma \ref{Lem_EasyDirection}.
 
 (3) $\Rightarrow$ (4): Trivial.
 
 (4) $\Rightarrow$ (1): Suppose the local dp-rank of $\varphi(x;y)$ is $> n$, witnessed by $a$ and $\langle b_i : i \in \mathbb{Q} \rangle$.  Choose $C \subseteq \mathbb{R}$ minimal so that, for all $i, j \in \mathbb{Q}$ with $i \sim_C j$, $\models \varphi(a; b_i) \leftrightarrow \varphi(a; b_j)$, so $|C| > n$.  If $C$ is infinite, $\varphi(x;y)$ has IP with respect to $p$, hence $\varphi(x;y)$ has infinite VC${}_{\text{ind}}$-density, so we may assume $C$ is finite.  Therefore, there exists at least $n+2$ infinite $\sim_C$ classes, call them $D_0 < D_1 < ... < D_{n+1}$.  For each consecutive pair $D_\ell$ and $D_{\ell+1}$, either the truth value of $\varphi(a; b_i)$ differs or there exists some $j_\ell \in \mathbb{Q}$ with $D_\ell < j_\ell < D_{\ell+1}$ on which the truth value of $\varphi(a; b_{j_\ell})$ differs from both.  If the truth values on $D_\ell$ and $D_{\ell+1}$ differ, choose $j_\ell \in D_{\ell+1}$ arbitrarily.  Now fix $N < \omega$ and $\overline{i} = \langle i_0, ..., i_n \rangle$ an $(n+1)$-tuple from $N$ with $i_0 < ... < i_n$ and let $B_N = \{ b_i : i = 0, 1, ..., N-1 \}$.  Choose an order-preserving map $g_{\overline{i}} : N \rightarrow \mathbb{Q}$ so that
 \begin{itemize}
  \item [(i)] $g_{\overline{i}}(i_\ell) = j_\ell$, and
  \item [(ii)] If $i_{\ell-1} < i < i_\ell$, then $g_{\overline{i}}(i) \in D_\ell$.
 \end{itemize}
 Then consider the indiscernible subsequence $\langle b_{g(i)} : i < N \rangle$ (for $g = g_{\overline{i}}$).  By indiscernibility, there exists $a_{\overline{i}} \models p$ so that, for all $i = 0, ..., N-1$,
 \[
  \models \varphi(a_{\overline{i}}; b_i) \leftrightarrow \varphi(a; b_{g(i)}).
 \]
 However, by the choice of $D_\ell$ and $j_\ell$, each type $\tp_\varphi(a_{\overline{i}} / B_N)$ is distinct for different choices of $\overline{i}$.  Therefore,
 \[
  | S_\varphi( B_N ) \cap [p] | \ge \binom{N}{n+1},
 \]
 which is on the order of $N^{n+1}$.  Hence, the VC${}_{\text{ind}}$-density of $\varphi$ is $\ge n+1$.  Therefore, for each $\ell < n+1$, VC${}_{\text{ind}}$-density of $\varphi$ is $> \ell$.  This is what we aimed to prove.
\end{proof}

Notice that Theorem \ref{Thm_RanksCoincide} (3) $\Leftrightarrow$ (4) provides a proof of Theorem \ref{Thm_VCindDensityIntegerValued}.  Moreover, Theorem \ref{Thm_RanksCoincide} (1) $\Leftrightarrow$ (3) and Proposition \ref{Prop_DPRankWorks} immediately give a proof of Theorem \ref{Thm_VCindDensityDPRank}.  Consider the following theorem.

\begin{thm}[Theorem 4.8 of \cite{KOUdpmin}]\label{Thm_Subadditivity}
 Suppose $A$ is a set, $a_1, a_2$ are tuples, and $k_1, k_2 < \omega$.  If $\tp(a_i / A)$ has dp-rank $\le k_i$ for $i = 1,2$, then $\tp(a_1, a_2 / A)$ has dp-rank $\le k_1 + k_2$.
\end{thm}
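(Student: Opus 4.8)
The plan is to prove the contrapositive, using the characterization of dp-rank by mutually indiscernible sequences, which (as noted just after Definition \ref{Defn_dpRank}) agrees with the single-sequence formulation of Definition \ref{Defn_dpRank} via \cite{KOUdpmin}. Recall that $\tp(a/A)$ has dp-rank $\ge m$ exactly when there are infinite sequences $\bar I_0, \dots, \bar I_{m-1}$ that are mutually indiscernible over $A$ (each $\bar I_t$ is indiscernible over $A \cup \bigcup_{s \neq t} \bar I_s$) and none of which is indiscernible over $Aa$; I will call such an $\bar I_t$ \emph{broken by $a$}. So I would assume toward a contradiction that $\tp(a_1 a_2 / A)$ has dp-rank $\ge k_1 + k_2 + 1$ and fix witnesses $\bar I_0, \dots, \bar I_{k_1 + k_2}$, mutually indiscernible over $A$, each broken by $a_1 a_2$. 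The target is to produce either $k_1 + 1$ sequences witnessing dp-rank $> k_1$ for $\tp(a_1/A)$ or $k_2 + 1$ witnessing dp-rank $> k_2$ for $\tp(a_2/A)$, contradicting the hypotheses.

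Before describing the mechanism I want to isolate the central difficulty, since it also explains why the obvious idea fails. In the single-sequence picture one is tempted to take cut-sets $C_1$ for $a_1$ (with $|C_1| \le k_1$) and $C_2$ for $a_2$ (with $|C_2| \le k_2$) along one indiscernible $\langle b_i : i \in \mathbb{Q} \rangle$ and set $C = C_1 \cup C_2$. But $i \sim_C j$ yields only $\tp(b_i / Aa_1) = \tp(b_j / Aa_1)$ and $\tp(b_i / Aa_2) = \tp(b_j / Aa_2)$ \emph{separately}, and these do not combine to give $\tp(b_i / Aa_1a_2) = \tp(b_j / Aa_1a_2)$: a break at a cut may genuinely require $a_1$ and $a_2$ acting together. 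Equivalently, a sequence broken by $a_1 a_2$ need not be broken by either parameter individually, so the naive pigeonhole over the base $A$ is simply unavailable. Overcoming this is the entire content of subadditivity.

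The mechanism I would use is a staged deletion argument mediated by an extraction lemma. The first move is to charge breaks to $a_1$: since $\tp(a_1/A)$ has dp-rank $\le k_1$ and any subfamily of mutually indiscernible sequences is again mutually indiscernible over $A$, at most $k_1$ of the $\bar I_t$ are broken by $a_1$, so at least $k_2 + 1$ of them are indiscernible over $Aa_1$; each such sequence, being broken by $a_1a_2$ but not by $a_1$, has its break caused by $a_2$. The crux, and what I expect to be the main obstacle, is that one cannot simply slide $a_1$ into the base and invoke $\tp(a_2/Aa_1)$, because dp-rank is \emph{not monotone under enlarging the base} (neither inequality between dp-rank over $A$ and over $Aa_1$ is free), and because mutual indiscernibility over $A$ need not persist over $Aa_1$. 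The resolution is a Ramsey/compactness extraction that processes the surviving sequences while keeping the base fixed at $A$: one re-extracts sequences realizing the same Ehrenfeucht--Mostowski types over $A$, restoring mutual indiscernibility and transporting the $a_2$-breaking behaviour into a genuine witness over $A$, thereby exhibiting $k_2 + 1$ sequences mutually indiscernible over $A$ and broken by $a_2$. This contradicts $\tp(a_2/A)$ having dp-rank $\le k_2$ and completes the argument. The technical heart is exactly this extraction lemma—simultaneously preserving mutual indiscernibility over the original base, the $A$-Ehrenfeucht--Mostowski types, and the decomposition of the joint break into an $a_2$-break—and it is the device that substitutes for the false naive monotonicity of dp-rank in the base.
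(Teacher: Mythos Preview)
The paper does not prove this theorem; it is quoted from \cite{KOUdpmin} without argument and then applied. Your overall strategy---use the mutually-indiscernible characterization, pigeonhole away at most $k_1$ sequences using $a_1$, then invoke an extraction lemma for the remainder---does match the architecture of the proof in \cite{KOUdpmin}. But your diagnosis of the obstacle is off in one important respect: dp-rank \emph{is} monotone under enlarging the base, in the direction you need. Via the ICT-pattern characterization (which the paper records as equivalent, citing Proposition~4.20 of \cite{SimonBook}), any ICT-pattern consistent with $\tp(a_2/Aa_1)$ is automatically consistent with the smaller type $\tp(a_2/A)$, so the inequality $\dpR(a_2/Aa_1)\le\dpR(a_2/A)\le k_2$ is free. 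Your other worry is correct: the $k_2+1$ surviving sequences, though mutually $A$-indiscernible and individually $Aa_1$-indiscernible, need not be \emph{mutually} $Aa_1$-indiscernible. That is the genuine gap the extraction lemma of \cite{KOUdpmin} repairs---but the extraction is carried out over $Aa_1$, not over $A$.

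Concretely, the lemma established there (by Erd\H{o}s--Rado plus compactness) says that from any mutually $A$-indiscernible family $(\bar I_t)_{t<m}$ one can produce a mutually $Aa_1$-indiscernible family $(\bar J_t)_{t<m}$ with $\bar J_t=\bar I_t$ for all but at most $\dpR(a_1/A)\le k_1$ indices. Applying this with $m=k_1+k_2+1$: the $(\bar J_t)$ are mutually $Aa_1$-indiscernible, so since $\dpR(a_2/Aa_1)\le k_2$ at least $k_1+1$ of them are $Aa_1a_2$-indiscernible, and at least one of those indices has $\bar J_t=\bar I_t$; hence some original $\bar I_t$ is $Aa_1a_2$-indiscernible, the desired contradiction. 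Your alternative---an extraction ``keeping the base fixed at $A$'' that transports an $a_2$-break over $Aa_1$ into an $a_2$-break over $A$---is not the mechanism used, and as you describe it there is no clear reason the re-extracted sequences (which already had the same EM-type over $A$ and were already mutually $A$-indiscernible) would fail to be $Aa_2$-indiscernible. The actual argument really does slide $a_1$ into the base; the easy monotonicity of dp-rank is what makes that move legitimate.
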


This, coupled with Theorem \ref{Thm_VCindDensityDPRank} gives a proof of Corollary \ref{Cor_VCindDenAdd}.  As a result of this corollary, we get the following.

\begin{cor}\label{Cor_VCindDensitydpMin}
 A theory $T$ is dp-minimal if and only if all formulas $\varphi(x;y)$ have VC${}_{\text{ind}}$-density $\le |x|$.
\end{cor}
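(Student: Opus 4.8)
The plan is to deduce this corollary directly from the two principal results already in hand, Theorem \ref{Thm_VCindDensityDPRank} and Corollary \ref{Cor_VCindDenAdd}, by reducing everything to the single-variable case. The key preliminary observation is that, by definition, $T$ is dp-minimal exactly when the partial type $x=x$ with $|x|=1$ has dp-rank $\le 1$; since dp-rank is monotone under extension of the partial type, this is equivalent to every $1$-type having dp-rank $\le 1$. I would open the proof by recording this reformulation so that both directions can be phrased in terms of dp-rank $\le 1$ in a single variable.

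For the forward direction, I would assume $T$ is dp-minimal, so that the partial type $x=x$ with $|x|=1$ has dp-rank $\le 1$. Applying Theorem \ref{Thm_VCindDensityDPRank} with $n=1$ and $p(x)$ equal to this partial type, I obtain that every formula $\varphi(x;y)$ with $|x|=1$ has VC${}_{\text{ind}}$-density $\le 1$. Then I would invoke Corollary \ref{Cor_VCindDenAdd} with $k=1$ to conclude that every formula $\varphi(x;y)$, for arbitrary $|x|$, has VC${}_{\text{ind}}$-density $\le 1\cdot|x| = |x|$, as desired.

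For the reverse direction, I would assume that every $\varphi(x;y)$ has VC${}_{\text{ind}}$-density $\le |x|$ and simply specialize to $|x|=1$: every formula $\varphi(x;y)$ in a single $x$-variable then has VC${}_{\text{ind}}$-density $\le 1$. Applying Theorem \ref{Thm_VCindDensityDPRank} once more (with $n=1$ and $p(x)$ the partial type $x=x$, $|x|=1$) shows that $x=x$ has dp-rank $\le 1$, which is precisely dp-minimality of $T$.

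I do not expect a genuine obstacle here, as the entire mathematical content is concentrated in the cited results; the only point requiring care is the forward direction's passage from one variable to many, which relies on the subadditivity of dp-rank (Theorem \ref{Thm_Subadditivity}) as already packaged into Corollary \ref{Cor_VCindDenAdd}. The remaining subtlety is purely bookkeeping, namely correctly identifying dp-minimality with dp-rank $\le 1$ of the single-variable type $x=x$ and matching the parameter $n=1$, $k=1$ in the two invocations.
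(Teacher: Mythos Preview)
Your proposal is correct and follows exactly the route the paper indicates: the paper merely states that the corollary follows from Corollary~\ref{Cor_VCindDenAdd} (together with Theorem~\ref{Thm_VCindDensityDPRank}), and your argument spells out precisely that deduction in both directions. There is nothing to add.
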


\section{Future Directions}\label{Sec_Future}

It is our hope that techniques developed in this paper for VC${}_{\text{ind}}$-density can me modified to answer the open questions about VC-density.

The definition of UDTFS${}_{\text{ind}}$-rank above differs drastically from the original definition of UDTFS-rank given in \cite{GuinLas} (related to the VC$d$ property in \cite{ADHMS}).  Fix a partial type $p(x)$ and a formula $\varphi(x;y)$.  We say that $\varphi$ has \emph{UDTFS-rank $\le n$ with respect to $p(x)$} if there exists finitely many formulas $\psi_r(y; z_1, ..., z_n)$ for $r < R$ such that, for every finite $B \subseteq \CC_y$ and $q(x) \in S_\varphi(B) \cap [p]$, there exists $b_1, ..., b_n \in B$ and $r < R$ such that
\[
 \psi_r(y; b_1, ..., b_n) \text{ defines } q(x).
\]
It is easy to show that the VC-density of $\varphi$ with respect to $p$ is bounded by the UDTFS-rank of $\varphi$ with respect to $p$ (by a simple counting argument).  Moreover, the following holds.

\begin{prop}[Theorem 3.13 of \cite{GuinLas}, Theorem 5.7 of \cite{ADHMS}]\label{Prop_UDTFSrank}
 If there exists $k < \omega$ such that, for all formulas $\varphi(x;y)$ with $|x| = 1$, the UDTFS-rank of $\varphi$ is $\le k$, then, for all formulas $\varphi(x;y)$, the UDTFS-rank of $\varphi$ is $\le k \cdot |x|$.
\end{prop}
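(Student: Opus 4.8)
The plan is to prove Proposition \ref{Prop_UDTFSrank} by induction on $m = |x|$, peeling off one object variable at each stage and paying a uniform cost of $k$ per variable. The base case $m = 1$ is exactly the hypothesis. For the inductive step, I would assume that every formula in $m-1$ object variables has UDTFS-rank $\le k(m-1)$, fix $\varphi(x;y)$ with $|x| = m$, and write $x = (x_1, x')$ with $|x_1| = 1$ and $|x'| = m-1$. Fixing a finite $B \subseteq \CC_y$ and $q(x) \in S_\varphi(B)$, realized by $a = (a_1, a') \models q$, the goal is to define $q$ uniformly from $km$ parameters drawn from $B$, using a finite family of formulas depending only on $\varphi$.

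The first move is to swap the roles of $x'$ and $x_1$, regarding $\Phi(x'; x_1, y) := \varphi(x_1, x'; y)$ as a formula with object tuple $x'$ of length $m-1$ and parameter sort $(x_1, y)$. Applying the inductive hypothesis to the $\Phi$-type of $a'$ over the parameter set $B' = \{a_1\} \times B$, I obtain one of finitely many formulas $\psi_r(x_1, y; u_1, \dots, u_{k(m-1)})$ defining that type, with each parameter $u_\ell = (a_1, b_{i_\ell})$ for some $b_{i_\ell} \in B$. The key structural point is that every $u_\ell$ shares the same first coordinate $a_1$, so I may substitute a single object variable for all of these coordinates and form $\chi_r(x_1; y, y_1, \dots, y_{k(m-1)}) := \psi_r(x_1, y; (x_1, y_1), \dots, (x_1, y_{k(m-1)}))$. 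Since $\Phi(a'; a_1, b) = \varphi(a_1, a'; b)$, this yields for all $b \in B$ the equivalence $\models \varphi(a_1, a'; b) \leftrightarrow \chi_r(a_1; b, b_{i_1}, \dots, b_{i_{k(m-1)}})$; in other words, the truth values defining $q$ are now recorded by the $\chi_r$-type of $a_1$ over $C := \{(b, b_{i_1}, \dots, b_{i_{k(m-1)}}) : b \in B\}$.

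The second move is to apply the one-variable hypothesis to $\chi_r$, which has a single object variable $x_1$ and parameter sort $(y, y_1, \dots, y_{k(m-1)})$, so its UDTFS-rank is $\le k$. Thus the $\chi_r$-type of $a_1$ over $C$ is defined by one of finitely many $\rho_{r,s}$ using $k$ further parameters, each necessarily of the form $(d_j, b_{i_1}, \dots, b_{i_{k(m-1)}})$ with $d_j \in B$, since every element of $C$ shares the trailing tuple $b_{i_1}, \dots, b_{i_{k(m-1)}}$. Composing the two stages gives a formula $\Psi_{r,s}(y; z_1, \dots, z_{k(m-1)}, z_1', \dots, z_k')$, obtained by plugging the appropriate tuples into $\rho_{r,s}$, which defines $q$ from the $km$ parameters $b_{i_1}, \dots, b_{i_{k(m-1)}}, d_1, \dots, d_k \in B$. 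As the families $\{\psi_r\}$ and $\{\rho_{r,s}\}$ are finite and depend only on $\varphi$ rather than on $B$, the resulting finite family $\{\Psi_{r,s}\}$ witnesses UDTFS-rank $\le km$, completing the induction.

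I expect the main obstacle to lie not in the parameter count $k(m-1) + k = km$, which is forced, but in maintaining uniformity across these repeated role-swaps between object and parameter variables. The delicate verification is that the inner-stage parameters all share the coordinate $a_1$ — this is precisely what legitimizes the substitution producing $\chi_r$ and what allows the outer stage to treat $a_1$ as a genuine single object variable — and, symmetrically, that the outer-stage parameters all share the fixed trailing tuple. Checking that only finitely many defining formulas arise, independently of $B$, requires tracking these substitutions with care, and this bookkeeping is the heart of the argument.
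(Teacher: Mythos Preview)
The paper does not supply its own proof of this proposition; it is quoted as Theorem~3.13 of \cite{GuinLas} and Theorem~5.7 of \cite{ADHMS} and used as a black box in the discussion of Section~\ref{Sec_Future}. Your argument is correct and is essentially the standard inductive proof given in those references: peel off one object variable, apply the inductive hypothesis with that variable shifted to the parameter side, then use the one-variable bound to eliminate it, exploiting at each stage that the freshly introduced parameters share a common coordinate so that the substitution back into a single object variable is legitimate.
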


However, it is easy to show that VC-density and UDTFS-rank do not coincide, so Proposition \ref{Prop_UDTFSrank} cannot be directly applied to answer Open Question \ref{Ques_VCDenAdd}.  Can we find a new rank by modifying UDTFS-rank to make it look similar to UDTFS${}_{\text{ind}}$-rank such that the following two conditions hold:
\begin{itemize}
 \item [(i)] this rank is subadditive as in Proposition \ref{Prop_UDTFSrank}, and
 \item [(ii)] this rank bounds VC-density and vice versa?
\end{itemize}
If such a rank exists, this would settle Open Question \ref{Ques_VCDenAdd}.  We cannot hope for a (integer valued) rank that coincides exactly with VC-density, since VC-densiy can be non-integer valued.  For example, see Proposition 4.6 of \cite{ADHMS}.

Another interesting question arising from our investigation is that of when VC-density and VC${}_{\text{ind}}$-density coincide. It is very easy to see that VC${}_{\text{ind}}$-density is bounded by VC-density, but it is not at all clear how one might use VC${}_{\text{ind}}$-density to estimate or bound the general VC-density. These two values certainly do not coincide in general, for VC${}_{\text{ind}}$-density is always integer-valued, while VC-density can take non-integer values even in very simple scenarios. Quite recently, however, Johnson (see Corollary 3.5 of \cite{Johnson}) has demonstrated a necessary and sufficient condition for the VC-density and VC${}_{\text{ind}}$-density of a given formula to be equal.

\begin{bibdiv}
\begin{biblist}

\bib{ADHMS}{article}{
  author = {M. Aschenbrenner},
  author = {A. Dolich},
  author = {D. Haskell},
  author = {H.D. MacPherson},
  author = {S. Starchenko},
   title = {Vapnik-Chervonenkis density in some theories without the independence property, I},
    note = {preprint}
}

\bib{Bodirsky}{article}{
  author = {M. Bodirsky},
   title = {New Ramsey classes from old},
    note = {preprint},
    year = {2012}
}

\bib{DGL}{article}{
  author = {A. Dolich},
  author = {J. Goodrick},
  author = {D. Lippel},
   title = {dp-Minimality: Basic facts and examples},
 journal = {Notre Dame J. Form. Log.},
  volume = {52},
  number = {3},
   pages = {267--288},
    year = {2011}
}

\bib{Guingona2}{article}{
  author = {V. Guingona},
   title = {On uniform definability of types over finite sets},
    year = {2012},
  journal = {J. Symbolic Logic},
   volume = {77},
   number = {2},
    pages = {499--514}
}

\bib{GuinLas}{article}{
  author = {V. Guingona},
  author = {M.C. Laskowski},
   title = {On VC-minimal theories and variants},
 journal = {Archive for Mathematical Logic},
    note = {to appear}
}

\bib{Hill}{article}{
  author = {C.D. Hill},
   title = {Generalized indiscernibles as model-complete theories},
    year = {2012},
 		note = {preprint}
}

\bib{Johnson}{article}{
  author = {H. Johnson},
   title = {Vapnik-Chervonenkis density on indiscernible sequences, stability, and the maximum property},
    year = {2013},
    note = {preprint}
}

\bib{KOUdpmin}{article}{
  author = {I. Kaplan},
  author = {A. Onshuus},
  author = {A. Usvyatsov},
   title = {Additivity of the dp-rank},
    note = {preprint}
}

\bib{KPT}{article}{
  author = {A. S. Kechris},
  author = {V. G. Pestov},
  author = {S. Todorcevic},
   title = {Fraisse limits, Ramsey theory, and topological dynamics of automorphism groups},
 journal = {Geom. Funct. Anal.},
  number = {15},
    year = {2005},
  number = {1},
   pages = {106–-189}
}
    
\bib{Las}{article}{
  author = {M.C. Laskowski},
   title = {Vapnik-Chervonenkis Classes of Definable Sets},
    year = {1992},
 journal = {J. London Math. Soc.},
  volume = {45},
  number = {2},
   pages = {377--384}
}

\bib{MS}{article}{
  author = {M. Malliaris},
  author = {S. Shelah},
   title = {Regularity lemmas for stable graphs},
 journal = {Trans. Amer. Math. Soc.},
}

\bib{Shelah}{book}{
   author = {S. Shelah},
   title= {Classification theory and the number of non-isomorphic models},
   publisher = {North-Holland Publishing Company},
   year = {1978}
}

\bib{Shelah715}{article}{
  author = {S. Shelah},
   title = {Classification theory for elementary classes with the dependence property - a modest beginning},
 journal = {Scientiae Math Japonicae},
  volume = {59},
  number = {2},
    year = {2004},
   pages = {265--316}
}

\bib{Shelah783}{article}{
  author = {S. Shelah},
   title = {Dependent first order theories, continued},
 journal = {Israel J Math},
  volume = {173},
    year = {2009},
   pages = {1--60}
}

\bib{Shelah863}{article}{
  author = {S. Shelah},
   title = {Strongly dependent theories},
    note = {preprint},
    year = {2009}
}

\bib{Shelah900}{article}{
  author = {S. Shelah},
   title = {Dependent theories and the generic pair conjecture},
    note = {preprint},
    year = {2012}
}

\bib{Simon}{article}{
  author = {P. Simon},
   title = {On dp-minimal ordered structures},
    year = {2011},
 journal = {J. Symbolic Logic},
  volume = {76},
  number = {2},
   pages = {448--460}
}

\bib{SimonBook}{book}{
  author = {P. Simon},
   title = {Lecture notes on NIP theories},
    year = {2012},
    note = {preprint}
}

\end{biblist}
\end{bibdiv}

\end{document}